\title{An ergodic theorem for partially exchangeable random partitions}
\author{Jim Pitman\footnote{Statistics Department, 367 Evans Hall \# 3860, University of California, Berkeley, CA 94720-3860, U.S.A.
    \texttt{pitman@berkeley.edu}}
  \and 
 Yuri Yakubovich\footnote{Saint Petersburg State University, St.\ Petersburg State University, 7/9 Universitetskaya nab., St.\ Petersburg, 199034 Russia. \texttt{y.yakubovich@spbu.ru}}}
\newcommand{\Cl}[1]{ {\mathcal C }_#1}
\newcommand{\Xp} { X}
\newcommand{\mindex}[2]{    { M_{#1, #2} } }
\newcommand{\Pdec}{P^{\downarrow}}
\newcommand{\BN}{\mathbb{N}}
\newcommand{\CC}{ {\mathcal C }}
\newcommand{\Q}{R}
\newcommand{\tilCC}{ \widetilde{ {\mathcal C }} }
\newcommand{\BP}{\mathbb{P}}
\renewcommand{\P}{\mathbb{P}}
\newcommand{\E}{\mathbb{E}}
\newcommand{\bull}{\cdots}
\newcommand{\Pbul}{(P_{j})}
\newcommand{\Pbuln}[1]{\bigl(P_{j}^{(#1)}\bigr)}
\newcommand{\Pbulstar}{(P_{j}^*)}
\newcommand{\Pdecbul}{\bigl(P^\downarrow_{j}\bigr)}
\newcommand{\ed}{\overset{d}{{}={}}}
\newcommand{\tilPi}{\widetilde{\Pi}}
\newcommand{\hatPi}{\widehat{\Pi}}
\newcommand{\giv}{\,|\,}  
\definecolor{gray}{gray}{0.4}
\newtheorem{theorem}{Theorem}
\newtheorem{proposition}[theorem]{Proposition}
\newtheorem{corollary}[theorem]{Corollary}
\theoremstyle{definition}
\newtheorem*{remark*}{Remark}
\numberwithin{equation}{section}
\numberwithin{theorem}{section}
\numberwithin{remark}{section}
\begin{document}



\maketitle

\begin{abstract}
We consider shifts $\Pi_{n,m}$ of a partially exchangeable random partition $\Pi_\infty$ of $\BN$ obtained by restricting $\Pi_\infty$
to $\{n+1,n+2,\dots, n+m\}$ and then subtracting $n$ from each element to get a partition of $[m]:= \{1, \ldots, m \}$. We show that for each fixed $m$ the distribution of
$\Pi_{n,m}$ converges to the distribution of the restriction to $[m]$ of the exchangeable random partition of $\BN$ with the same ranked frequencies as $\Pi_\infty$. As a consequence, the partially
exchangeable random partition $\Pi_\infty$ is exchangeable if and only if  $\Pi_\infty$ is stationary in the sense that for each fixed $m$ the distribution of $\Pi_{n,m}$ on partitions of $[m]$ is the same for all $n$.
We also describe the evolution of the frequencies of a partially exchangeable random partition  under the shift transformation. For an exchangeable
random partition with proper frequencies, the time reversal of this evolution is the {\em heaps process} studied by Donnelly and others.
\end{abstract}

\tableofcontents

\newpage

\section{Introduction}

A random partition $\Pi_\infty$ of the set $\BN$ of positive integers arises naturally in a number of different contexts.
The fields of application include population genetics \cite{MR2026891} 
\cite{MR0526801}, 
statistical physics \cite{Higgs95},  
Bayesian nonparametric statistics 
\cite{MR350949} 
and many others. Moreover, this subject has some purely mathematical interest. We also refer to 
\cite{MR2245368}  
for various results on random partitions.

There are two convenient ways to encode a random partition $\Pi_\infty$ of the set $\BN$ as a sequence whose $n$th term
ranges over a finite set of possible values. One way is to identify $\Pi_\infty$ with its sequence of restrictions $\Pi_n$ to the
sets $[n]:= \{1, \ldots, n \}$, say $\Pi_\infty = (\Pi_n)$ where $n$ will always range over $\BN$. Another encoding is provided by the
{\em allocation sequence} $(A_n)$ with $A_n = j$ iff $n \in \CC_j$ where $\Pi_\infty = \{ \CC_1, \CC_2, \ldots \}$
with the {\em clusters } $\CC_j$ of $\Pi_\infty$ listed in increasing order of their least elements, also called {\em order of appearance}.
Commonly, random partitions $\Pi_\infty$ of $\BN$ are generated by some sequence of random variables $(X_n)$, meaning that 
$(\CC_j)$ is the collection of equivalence classes for the random equivalence relation $m \sim n$ iff $X_m = X_n$. Every random partition of $\BN$
is generated in this way by its own allocation sequence.
If $F$ is a random probability distribution, and given $F$ the sequence $(X_n)$ is i.i.d.\ according to $F$,  and then $\Pi_\infty$ is generated by $(X_n)$,
say $\Pi_\infty$ is {\em generated by sampling from $F$}.
Kingman \cite{MR0526801}  
\cite{MR509954} 
developed a theory of random partitions that are {\em exchangeable} in the sense that for each $n$ the distribution of $\Pi_n$ on the
set of partitions of $[n]$ is invariant under the natural action on these partitions by permutations of $[n]$. Kingman's main results
can be summarized as follows:
\begin{itemize}
\item every exchangeable random partition $\Pi_\infty$ of $\BN$ has the same distribution as one generated by sampling from some random distribution $F$ on the real line;
\item the distribution of $\Pi_\infty$ generated by sampling from $F$ depends only on the joint distribution of the list $\Pdecbul$ of sizes of
atoms of the discrete component of $F$, in weakly decreasing order.
\end{itemize}
Two immediate consequences of these results are:
\begin{itemize}
\item every cluster $\CC_j$ of an exchangeable random partition $\Pi_\infty$  of $\BN$ has an almost sure limiting relative frequency $P_j$;
\item ranking those limiting relative frequencies gives the distribution of ranked atoms $\Pdecbul$ required to replicate
the distribution of $\Pi_\infty$ by random sampling from an $F$ with those ranked atom sizes.
\end{itemize}
Kingman's  method of  analysis of exchangeable random partitions of $\BN$,
 by working with the distribution of its ranked frequencies $(\Pdec_j)$, continues to be used in the study
of partition-valued stochastic processes \cite{MR2596654}.
But well known examples, such as the random partition of $\BN$ whose distribution of $\Pi_n$ is given by the Ewens sampling formula
\cite{MR0325177} \cite{MR1434129}, show it is often more convenient to encode the distribution of an exchangeable random partition of $\BN$ by the distribution of its frequencies of
clusters $\Pbul$ in their order of appearance, rather than in weakly decreasing order. 
This idea was developed in  Pitman \cite{MR1337249}, together with a more convenient encoding of the distribution of $\Pi_n$.
Call $\Pi_\infty$ a {\em  partially exchangeable partition (PEP) of\/ $\BN$} if for each fixed $n$ the distribution of $\Pi_n$ is given by the formula
\begin{equation}
\label{pepf}
\P( \Pi_n = \{ C_1, \ldots, C_k \} ) = p(\#C_1, , \ldots, \#C_k ) 
\end{equation}
for each particular partition of $[n]$ with $k$ clusters $C_1, \ldots, C_k$  in order of appearance, 
of sizes $\#C_1, \ldots, \#C_k$,
for some function $p(n_1, \ldots, n_k)$ of
{\em compositions of $n$}, meaning sequences of positive integers $(n_1, \ldots, n_k)$ with $\sum_{i=1}^k n_i = n$ for some $1 \le k \le n$.
The main results of \cite{MR1337249} can be summarized as follows. See also \cite[Chapters 2,\;3]{MR2245368}. 
\begin{itemize}
\item There is a one-to-one correspondence between distributions of partially exchangeable partitions of $\BN$ and
non-negative functions $p$ of compositions of positive integers subject
to the normalization condition $p(1) = 1$ and the sequence of addition rules
\begin{equation}\label{PEPaddition}
\begin{aligned}
p(n) &= p(n + 1 )  + p(n,1), \\
p(n_1,n_2) &= p(n_1+1, n_2)+  p(n_1 ,n_2+1) + p( n_1 , n_2, 1) 
\end{aligned}
\end{equation}
and so on. 
This function $p$ associated with $\Pi_\infty$ is called its {\em partially exchangeable partition probability function (PEPPF)}.

\item $\Pi_\infty$ is exchangeable iff $\Pi_\infty$  is partially exchangeable with a $p(n_1, \ldots, n_k)$ that is for each fixed $k$ a symmetric function of its $k$ arguments.

\item Every cluster $\CC_j$ of a partially exchangeable random partition $\Pi_\infty$  of $\BN$ has an almost sure limiting relative frequency $P_j$,
with $P_j = 0$ iff $\CC_j$ is a {\em singleton}, meaning $\# \CC_j = 1$.

\item The distribution of the sequence of cluster frequencies $(P_j)$ and the PEPPF $p$ determine each other by the {\em product moment formula}
\begin{equation}
\label{expeppf}
p(n_1, \ldots, n_k ) = \E \prod_{i=1}^k ( 1 - \Q_{i-1}) P_i^{n_i - 1 } \; \mbox{ where }\; \Q_{i}:= \sum_{j = 1}^i P_j
\end{equation}
is the cumulative frequency of the first $i$ clusters of $\Pi_\infty$.

\item The set of all PEPPFs $p$ is a convex set in the space of bounded real-valued functions of compositions of positive integers,  compact in the topology
of pointwise convergence.
\item
The extreme points of this convex compact set of PEPPFs are given by the formula \eqref{expeppf} for non-random sequences of sub-probability cluster frequencies $\Pbul$, meaning that
$P_j \ge 0$ and $\sum_{j} P_j \le 1$.

\item The formula
\eqref{expeppf}, for a random sub-probability distribution $\Pbul$, provides the unique representation of a general PEPPF as an integral mixture of these extreme PEPPFs.

\item The family of distributions of partitions of $\BN$ with PEPPFs \eqref{expeppf}, as a fixed sequence $\Pbul$ varies over all sub-probability distributions, and the $\E$ can be omitted,
provides for every exchangeable or partially exchangeable random partition $\Pi_\infty$ of $\BN$ a regular conditional distribution of $\Pi_\infty$ given its cluster frequencies $\Pbul$
in order of appearance.
\end{itemize}

These results provide a theory of partially exchangeable random partitions of $\BN$ that is both simpler and more general than the theory of exchangeable random partitions.
The structure of partially exchangeable random partitions of $\BN$ is nonetheless very closely tied to that of exchangeable random partitions, due to the last point above.
Starting from the simplest exchangeable random partition of $\BN$ with an infinite number of
clusters, whose cumulative frequencies $(\Q_k)$ have the same distribution as the sequence of record values of an i.i.d.\ uniform $[0,1]$ sequence, given by the stick-breaking
representation
\begin{equation}\label{stickbreaking}
1-\Q_k=\prod_{i=1}^{k} (1-H_i), \qquad k=1,2,\dots,
\end{equation}
for $H_i$ a sequence of i.i.d.\ uniform $[0,1]$ variables, the most general extreme partially exchangeable random partition $\Pi_\infty$ of $\BN$ with fixed cluster frequencies
$\Pbul$ may be regarded as derived from this record model by conditioning its cluster frequencies. 
See \cite{MR1374320} 
for further development of this point.
Less formally, a PEP is as exchangeable as it possibly can be, given that its distribution of cluster frequencies $\Pbul$ in appearance order has been
altered beyond the constraints on the frequencies of an exchangeable random partition of $\BN$. For {\em proper frequencies} $\Pbul$, 
with $\sum_j P_j =1$ almost surely, those constraints are that
\begin{itemize}
\item a partially exchangeable $\Pi_\infty$ with proper frequencies $\Pbul$ is exchangeable iff $\Pbul \ed \Pbulstar$ where $\Pbulstar$ is a size-biased random permutation of $\Pbul$.
\end{itemize}
Then $\Pbul$ is said to be in {\em size-biased random order} or {\em invariant under size-biased random permutation} 
\cite{MR996613} 
\cite{MR1387889}. 

For a partially exchangeable random partition $\Pi_\infty$,
consider for each $n = 0,1,2, \ldots$ the random partition $\Pi_\infty^{(n)}$ of $\BN$ defined by first restricting $\Pi_\infty$ to $\{n+1, n+2, \ldots \}$,
then shifting indices back by $n$ to make a random partition of  $\{1,2, \ldots\}$ instead of $\{n+1,n+2, \ldots\}$. This procedure
appeared in our paper \cite{0gibbs} as discussed further in Section \ref{sec:crl} below.
If $\Pi_\infty$ is exchangeable, then obviously so is $\Pi_\infty^{(n)}$, because $\Pi_\infty^{(n)} \ed \Pi_\infty$ for every $n$.
Moreover, the sequence of random partitions $\bigl(\Pi_\infty^{(n)}, n = 0,1, \ldots\bigr)$ is a stationary random process to which the ergodic theorem can be immediately
applied. According to Kingman's representation, this process of shifts of $\Pi_\infty$ is ergodic iff the ranked frequencies of $\Pi_\infty$ are constant almost surely, 
For more general models with random ranked frequencies, the asymptotic behavior of functionals of $\Pi_\infty^{(n)}$ 
can be read from the ergodic case by conditioning on the ranked frequencies.

If $\Pi_\infty$ is only partially exchangeable, it is easily shown that $\Pi_\infty^{(n)}$ is also partially exchangeable for every $n$.
The PEPPF $p^{(n)}$ of $\Pi_\infty^{(n)}$ is obtained by repeated application of  the following simple transformation from the 
PEPPF $p$ of $\Pi_\infty$ to the PEPPF  $p^{(1)}$ of $\Pi_\infty^{(1)}$:
\begin{equation}\label{PEPshift}
\begin{aligned}
p^{(1)}(n) &= p(1,n) + p(n + 1 ),   \\
p^{(1)}(n_1,n_2) &=   p(1,n_1,n_2) + p( n_1 + 1, n_2) + p(n_1, n_2 + 1) 
\end{aligned}
\end{equation} 
and so on, in parallel to the basic consistency relations \eqref{PEPaddition}
for a PEPPF. 
If $\Pi_\infty$ is partially exchangeable, with 
$\Pi_\infty^{(1)} \ed \Pi_\infty$, or equivalently $p^{(1)}(\bull) = p(\bull)$,
 then call $\Pi_\infty$ {\em stationary}. Its sequence of shifts $(\Pi_\infty^{(n)}, n = 0,1, \ldots)$ is then a stationary random process to which the ergodic theorem can be applied.
That raises two questions: 
\begin{itemize}
\item[(i)] Are there any partially exchangeable random partitions of $\BN$ which are stationary but not exchangeable?
\item[(ii)] If a partially exchangeable random partition of $\BN$ is not stationary, what can be provided as an ergodic theorem governing the long run behavior of its sequence of shifts?
\end{itemize}
Since $\Pi_\infty$ is exchangeable iff $p$ is symmetric, the answer to the question (i) is ``yes'' if and only if
\begin{equation}
\label{funcsym}
\parbox{0.8\textwidth}{
{\em 
every function $p$ of compositions that is bounded between $0$ and $1$ and satisfies both systems of equations \eqref{PEPaddition} and \eqref{PEPshift} is a symmetric function of its arguments.
}}
\end{equation}
So it seems the matter should be resolved by analysis of the combined system of equations. Surprisingly, this does not seem to be 
easy. Still, we claim that every partially exchangeable and stationary random partition of $\BN$ is in fact exchangeable,
so 
\eqref{funcsym} is true.
We do not know how to prove this without dealing with question (ii) first. But
that question is of some independent interest, so we formulate the following theorem:
\begin{theorem}
\label{thm:main}
Let\/ $\Pi_\infty$ be a partially exchangeable random partition of positive integers with ranked frequencies $\Pdecbul$, and let $\Pbuln{n}$ for 
each $n = 0,1, \ldots$ be the frequencies of clusters of\/ $\Pi_\infty$ 
in the order of appearance of these clusters to in\/ $\Pi_\infty^{(n)}$ obtained from the restriction of\/ $\Pi_\infty$ restricted to $\{n+1, n+2, \ldots \}$. Then: 
\begin{itemize}
\item
As $n \to \infty$, the distribution of\/ $\Pi_\infty^{(n)}$ converges weakly to that of the exchangeable random partition $\tilPi_\infty$ of\/ $\BN$ with ranked frequencies $\Pdecbul$, meaning that
the PEPPF $p^{(n)}(\bull)$ 
of\/ $\Pi_\infty^{(n)}$ 
converges pointwise to the EPPF 
$p^{(\infty)}(\bull)$ 
of\/ 
$\tilPi_\infty$.
\item
As $n \to \infty$, the finite dimensional distributions of\/ $\Pbuln{n}$ converge weakly to those of $\Pbul$, the list of frequencies in order of appearance of
an exchangeable random partition of $\BN$ with ranked frequencies $\Pdecbul$, which for proper $\Pdecbul$ with $\sum_j \Pdec_j = 1$ is a size-biased random permutation of $\Pdecbul$,
or of $\Pbuln{n}$ for any fixed $n$.
\item
$\Pi_\infty$ is exchangeable iff the partition-valued process $\Pi_\infty^{(n)}$ is stationary, meaning that $\Pi_\infty^{(n)} \ed \Pi_\infty$ for $n =1$, hence for all $n \ge 1$,
or, equivalently, the PEPPF $p^{(n)}(\bull)$ equals the PEPPF $p(\bull)$ of\/ $\Pi_\infty$ for $n =1$, hence for all $n \ge 1$.
\end{itemize}
\end{theorem}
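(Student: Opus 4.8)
The plan is to reduce all three assertions to the case of a non‑random frequency sequence, where $\Pi_\infty$ admits a transparent sequential (``Chinese restaurant'') description, and then to extract the limit directly from the product moment formula \eqref{expeppf}. By the last property of PEPPFs recalled above, the law of $\Pi_\infty$ is the mixture, over the law of $\Pbul$, of the extreme partition with prescribed frequencies, and since the shift $\Pi_\infty\mapsto\Pi_\infty^{(n)}$ is a single measurable map, $p^{(n)}$ is the corresponding mixture of the shifted extreme PEPPFs. As every PEPPF is bounded by $1$, dominated convergence lets us treat the frequencies as a fixed sub‑probability sequence $(p_j)$, the general statement being recovered at the end by integrating against the law of $\Pbul$ and using that an exchangeable EPPF depends only on the ranked frequencies.

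So fix $(p_j)$, put $r:=\sum_j p_j\le 1$ and $r_i:=\sum_{j\le i}p_j$. Taking ratios of the values \eqref{expeppf} shows that $\Pi_\infty$ is built by the rule: given $\Pi_n$ has $j$ clusters, element $n+1$ joins the $i$‑th of them with probability $p_i$ and starts a new $(j{+}1)$‑st cluster with probability $1-r_j$. Hence the number of clusters $J_n$ of $\Pi_n$ is itself a Markov chain, and, conditionally on $J_n=j$, the restriction of $\Pi_\infty$ to $\{n{+}1,n{+}2,\dots\}$ is produced by continuing this rule from $j$ clusters carrying frequencies $p_1,\dots,p_j$. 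Relabelling its clusters by order of appearance, we find that, conditionally on $J_n=j$, the shift $\Pi_\infty^{(n)}$ has the law of the partition $Q_j$ in which old clusters $1,\dots,j$ of frequencies $p_1,\dots,p_j$ are always available, and each incoming element joins cluster $i$ with probability $p_i$ (for $i$ old, or among the new clusters created so far) or else founds the next new cluster, the $s$‑th new cluster getting frequency $p_{j+s}$ and being founded with probability $1-r_{j+s-1}$. Thus $\Pi_\infty^{(n)}$ is a mixture of the $Q_{J_n}$; moreover $J_n\uparrow J_\infty$, which is almost surely $\infty$ except when $r=1$ and only finitely many $p_j$ are positive, in which case $J_\infty$ stabilizes at the deterministic value $\min\{j:r_j=1\}$ and $Q_{J_\infty}$ is itself the exchangeable partition got by sampling from $(p_j)$ (its new‑cluster probability then being $0$).

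The crux is the limit of $Q_j$ as $j\to\infty$. Writing $\P(Q_j$ equals a fixed partition of $[m]$ with blocks of sizes $n_1,\dots,n_k$ in order of appearance$)$ as a finite sum over the ways of declaring each block either an old cluster (an injection of the block labels into $\{1,\dots,j\}$, contributing $\prod p_{a_i}^{\,n_i}$) or a new cluster (its $s$‑th one, contributing $(1-r_{j+s-1})\,p_{j+s}^{\,n_i-1}$), and letting $j\to\infty$ with $p_j\to 0$ and $r_j\to r$: a new‑cluster block of size $\ge 2$ contributes $0$ in the limit, a new‑cluster block of size $1$ contributes $1-r$, and the old‑cluster sums converge (dominated convergence for series, since $\sum_j p_j^{\,n_i}\le\sum_j p_j\le 1$) to the analogous injective sums over all of $\BN$; the resulting expression is exactly Kingman's EPPF of the exchangeable partition $Q_\infty$ obtained by sampling from a law with atoms $(p_j)$ and a continuous part of mass $1-r$. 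Combining with the mixture representation and $J_n\uparrow J_\infty$ gives $p^{(n)}\to p^{(\infty)}$ pointwise, and integrating back over $\Pbul$ identifies $p^{(\infty)}$ with the EPPF of the exchangeable partition with ranked frequencies $\Pdecbul$, i.e.\ of $\tilPi_\infty$; this is the first bullet. The second bullet is then automatic: the values $p^{(n)}(n_1,\dots,n_k)=\E\prod_{i=1}^k(1-R^{(n)}_{i-1})(P^{(n)}_i)^{n_i-1}$, where $R^{(n)}_i:=P^{(n)}_1+\dots+P^{(n)}_i$, determine (as $(n_1,\dots,n_k)$ ranges over all compositions) all joint moments of $\Pbuln{n}$ by elementary linear combinations — expand each $\prod_i(1-R^{(n)}_{i-1})$ as a polynomial in $P^{(n)}_1,P^{(n)}_2,\dots$ and induct on $k$ — and since these coordinates lie in $[0,1]$ their joint law is determined by, and weakly continuous in, its moments; hence the finite‑dimensional distributions of $\Pbuln{n}$ converge to those of the order‑of‑appearance frequencies $\Pbul$ of $\tilPi_\infty$, a size‑biased permutation of $\Pdecbul$ in the proper case by the property recalled before the theorem. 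I expect the main obstacle to be exactly this middle step: extracting the conditional description $Q_j$ from the sequential rule and controlling the $j\to\infty$ limit uniformly over the cases $r<1$ and $r=1$.

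Finally, for the third bullet: if $\Pi_\infty$ is exchangeable then restricting it to $\{2,3,\dots\}$ and relabelling produces a partition with the same law, so $\Pi_\infty^{(1)}\ed\Pi_\infty$, and iterating the shift \eqref{PEPshift} gives $\Pi_\infty^{(n)}\ed\Pi_\infty$ for every $n$. Conversely, if $\Pi_\infty^{(1)}\ed\Pi_\infty$, i.e.\ $p^{(1)}=p$, then iterating \eqref{PEPshift} gives $p^{(n)}=p$ for all $n$, and letting $n\to\infty$ the first bullet forces $p=p^{(\infty)}$, which is a symmetric function of its arguments; hence $\Pi_\infty$ is exchangeable. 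This settles \eqref{funcsym} as well.
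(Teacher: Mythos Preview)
Your argument is correct and genuinely different from the paper's. The paper proves the first bullet by a coupling: given the frequencies $(P_j)$ and i.i.d.\ uniforms $(U_n)$, it builds both a PEP $\hatPi_\infty\ed\Pi_\infty$ (via the allocation rule) and the exchangeable paintbox $\tilPi_\infty$ on the same space, and observes that on the window $[n{+}1,n{+}m]$ the two can differ only through ``new'' clusters of positive frequency, yielding the bound $\P(\hatPi_{[n+1,n+m]}\ne\tilPi_{[n+1,n+m]})\le m\,\E(R_\infty-R_{K_n})\to 0$. You instead condition on $J_n$, identify the conditional law of $\Pi_\infty^{(n)}$ with your process $Q_j$, write its PEPPF as a finite sum over assignments of blocks to old clusters in $[j]$ or to successive new clusters $j{+}1,j{+}2,\ldots$, and let $j\to\infty$ term by term to recover Kingman's paintbox EPPF. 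Both arguments rest on the same structural fact---that after time $n$ the first $J_n$ clusters already behave like an i.i.d.\ paintbox and only the tail frequencies $p_{j+1},p_{j+2},\ldots$ distinguish $\Pi_\infty^{(n)}$ from $\tilPi_\infty$---but the paper's coupling gives a clean quantitative rate with almost no computation, while your route makes the PEPPF of $\Pi_\infty^{(n)}$ completely explicit (as a mixture over $J_n$ of the $Q_j$ formulas), which is informative in its own right.

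One point deserves tightening. In your second bullet you say the joint moments $\E\prod_i (P_i^{(n)})^{m_i}$ are ``elementary linear combinations'' of the PEPPF values obtained by expanding $\prod_i(1-R^{(n)}_{i-1})$. The expansion goes the other way: it expresses each PEPPF value as a \emph{finite} signed sum of moments, but inverting this to recover a single moment requires telescoping an \emph{infinite} series of PEPPF values (e.g.\ $\E P_1^{m_1}P_2^{m_2}=\sum_{\ell>m_1}p(\ell,m_2{+}1)$ for $m_2\ge 1$), so pointwise convergence of $p^{(n)}$ does not immediately give convergence of moments. The paper sidesteps this by citing \cite[Theorem~15]{MR1337249}: pointwise convergence of PEPPFs is equivalent to weak convergence of the associated partitions, which in turn forces weak convergence of the frequency sequences. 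You can repair your argument the same way, or argue directly by tightness: the laws of $(P_1^{(n)},\ldots,P_k^{(n)})$ live on $[0,1]^k$, so every subsequence has a weakly convergent further subsequence, and any subsequential limit has PEPPF $p^{(\infty)}$ by bounded convergence in \eqref{expeppf}, hence is unique. Your treatment of the third bullet coincides with the paper's.
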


In view of the one-to-one correspondence between the law of a partially exchangeable partition $\Pi_\infty$ and the law of its frequencies of clusters in order of appearance, this theorem has the following corollary:
\begin{corollary}
\label{crl:main}
In the setting of the previous theorem,
with $\Pbuln{n}$ for each $n = 0,1, \ldots$ the frequencies of a partially exchangeable partition\/ $\Pi_\infty$ 
in their order of appearance when $\Pi_\infty$ is restricted to $\{n+1, n+2, \ldots \}$,
\begin{itemize}
\item
the sequence $\bigl(\Pbuln{n}, n = 0,1, \ldots\bigr)$ is a Markov chain with stationary transition probabilities on the space of sub-probability distributions of $\BN${\upshape;}
for $n\ge1$ the forwards transition mechanism from $\Pbuln{n-1}$ to $\Pbuln{n}$ is by a {\em top to random move}, whereby
given $\Pbuln{n-1} = \Pbul$ the value $\Pbuln{n}$ is either $(P_2,P_3,\ldots)$ if $P_1=0$, or 
$$
(P_2,\dots,P_{X},P_1,P_{X+1},P_{X+2},\dots)
\mbox{ if } P_1>0,
$$ 
for some 
random position $X \in \BN$ with the proper conditional distribution
\begin{equation}
\label{Jdist}
\BP[ X>j \giv \Pbul] = 
\prod_{i=1}^j\Bigl(1-\frac{P_1}{1-P_2-P_3-\dots-P_i}\Bigr),\qquad j=1,2\dots;
\end{equation}
\item
$\Pi_\infty$ is exchangeable if and only if the Markov chain $\Pbuln{n}$ is stationary, meaning that $\Pbuln{n} \ed \Pbuln{0}$ for $n =1$ and hence for all $ n \ge 1$; 
\item
if\/ $\Pi_\infty$ is exchangeable the reversed transition mechanism from $\Pbuln{n}$ to $\Pbuln{n-1}$ is by a {\em random to top move}, whereby
with probability $1-\sum_j P^{(n)}_j$ the frequency $0$ is prepended to the sequence $\Pbuln{n}$, 
otherwise 
with probability $P^{(n)}_{j}$ the frequency $P^{(n)}_{j}$ is removed and put in place $1$.
\end{itemize}
\end{corollary}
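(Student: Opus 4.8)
The three assertions of the corollary will be proved in turn, the first being the substantive one. The key input is the last item in the list of properties of PEPPFs quoted above: a regular conditional distribution of any partially exchangeable partition given its cluster frequencies in order of appearance is the \emph{extreme} PEP with those (now deterministic) frequencies, which is generated by the sequential rule under which, given that clusters $1,\dots,k$ have so far appeared and have frequencies $P_1,\dots,P_k$, the next integer joins cluster $i\le k$ with probability $P_i$ and opens cluster $k+1$ with probability $1-(P_1+\dots+P_k)$; this is exactly the sequential form of the extreme case of the product moment formula \eqref{expeppf}. Since every $\Pi_\infty^{(n)}$ is again a PEP and $\Pi_\infty^{(n+1)}$ (hence also $\Pbuln{n+1}$) is a deterministic function of $\Pi_\infty^{(n)}$ — restrict to $\{2,3,\dots\}$ and shift — I would prove by induction on $n$ that, conditionally on $(\Pbuln{0},\dots,\Pbuln{n})$, the partition $\Pi_\infty^{(n)}$ is the extreme PEP with frequencies $\Pbuln{n}$. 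The base case $n=0$ is the quoted statement. For the inductive step one conditions first on $(\Pbuln{0},\dots,\Pbuln{n-1})$, where by hypothesis $\Pi_\infty^{(n-1)}$ is the extreme PEP with frequencies $\Pbuln{n-1}$, so that $\Pi_\infty^{(n)}$ is a PEP whose own frequencies in order of appearance are $\Pbuln{n}$; conditioning that PEP additionally on $\Pbuln{n}$ and applying the quoted statement once more yields the claim. It follows immediately that $(\Pbuln{n})_{n\ge0}$ is a Markov chain whose one-step transition kernel is $\Pbul\mapsto$ (law of the frequencies in order of appearance of the shift of the extreme PEP with frequencies $\Pbul$); this kernel does not depend on $n$, so the chain is time-homogeneous.

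It remains to identify that kernel, for which I would work inside the extreme PEP with frequencies $(P_1,P_2,\dots)$, where integer $1$ is in cluster $1$. If $P_1=0$ then cluster $1$ is a singleton, disappears upon restriction to $\{2,3,\dots\}$, and the frequencies in order of appearance of the shifted partition are $(P_2,P_3,\dots)$. If $P_1>0$ then cluster $1$ reappears among the integers $\ge2$, and if $X-1$ new clusters have been opened strictly before it does, those are precisely the clusters occupying positions $2,\dots,X$ of the unshifted partition, so after the shift the order of appearance becomes $(P_2,\dots,P_X,P_1,P_{X+1},\dots)$ — the top to random move. To get the conditional law of $X$ one runs the sequential rule from integer $2$ onward and records only the two relevant kinds of step, ``return to cluster $1$'' and ``open a new cluster'', ignoring repeats of already-opened new clusters: while clusters $1,\dots,i$ have appeared, the next such step is an opening rather than a return to cluster $1$ with conditional probability $\bigl(1-P_1-P_2-\dots-P_i\bigr)/\bigl(1-P_2-\dots-P_i\bigr)=1-P_1/(1-P_2-\dots-P_i)$; multiplying these over $i=1,\dots,j$ gives $\BP[X>j\giv\Pbul]$ as in \eqref{Jdist}, and since each factor is at most $1-P_1<1$ the product tends to $0$, so the distribution is proper.

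For the second assertion, stationarity of the Markov chain means $\Pbuln{1}\ed\Pbuln{0}$; by the one-to-one correspondence between the law of a PEP and the law of its frequencies in order of appearance this is the same as $\Pi_\infty^{(1)}\ed\Pi_\infty$, i.e.\ $\Pi_\infty$ is stationary, which by the third bullet of Theorem~\ref{thm:main} is equivalent to exchangeability; and once $\Pbuln{1}\ed\Pbuln{0}$ holds, $\Pbuln{n}\ed\Pbuln{0}$ for all $n$ because the transitions are stationary. For the third assertion, suppose $\Pi_\infty$, and hence every $\Pi_\infty^{(n)}$, is exchangeable. By Kingman's representation we may take $\Pi_\infty^{(n-1)}$ to be generated by an i.i.d.\ sample $X_1,X_2,\dots$ from a random distribution $F$; then $\Pi_\infty^{(n)}$ is generated by $X_2,X_3,\dots$, and, given $F$ and $(X_m)_{m\ge2}$, the additional integer $1$ equals an atom $x$ of $F$ with probability $F(\{x\})$. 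As a positive-mass atom is almost surely hit by the infinite sample $X_2,X_3,\dots$, almost surely the cluster of integer $1$ is either the one appearing in position $j$ of $\Pi_\infty^{(n)}$, with probability $P^{(n)}_j$, or a fresh value, with probability $1-\sum_j P^{(n)}_j$; conditioning further on $\Pbuln{n}$ does not change these probabilities. In the first case that cluster jumps to the front of the order of appearance in $\Pi_\infty^{(n-1)}$, so $\Pbuln{n-1}=(P^{(n)}_j,P^{(n)}_1,\dots,P^{(n)}_{j-1},P^{(n)}_{j+1},\dots)$; in the second case a frequency $0$ is prepended. Since the time-reversal of a stationary time-homogeneous Markov chain is again a time-homogeneous Markov chain, this is exactly the stated reversed transition.

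The step I expect to require the most care is the inductive proof of the Markov property, namely checking that conditioning on the entire past $(\Pbuln{0},\dots,\Pbuln{n})$ — rather than merely on $\Pbuln{n}$ — still leaves $\Pi_\infty^{(n)}$ distributed as the extreme PEP with frequencies $\Pbuln{n}$; everything else is either an appeal to Theorem~\ref{thm:main} together with the PEP/EPPF dictionary, or a concrete computation with the sequential sampling rule. One should also keep track, in the last assertion, of the combinatorial bookkeeping when several clusters of $\Pi_\infty^{(n)}$ carry equal frequencies, although the probabilistic statement that integer $1$ joins the position-$j$ cluster with probability $P^{(n)}_j$ is unaffected by that.
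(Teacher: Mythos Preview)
Your proof is correct and follows essentially the same route as the paper: both derive the Markov property and the explicit transition kernel from the extreme CRP description \eqref{exCRP}, then read off the remaining assertions from Theorem~\ref{thm:main} and the bijection between PEP laws and laws of ordered frequencies. The paper packages the computation of the law of $X$ into a separate Proposition (proved by summing geometric series to get $\P(X=j)$ for small $j$ and recognising the stick-breaking pattern), whereas you compute the tail $\P(X>j)$ directly as a product of conditional ``open rather than return'' probabilities; these are equivalent and your version is arguably cleaner. Your explicit inductive justification of the Markov property and your Kingman-paintbox argument for the reversed transition are more detailed than the paper's one-line appeals to ``general theory'', but the underlying ideas coincide.
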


According to the last part of the Corollary, when $\Pi_\infty$ is exchangeable, with proper frequencies, the time-reversed random-to-top evolution of the
cluster frequencies $\Pbuln{n}$ of $\Pi_\infty^{(n)}$
is the mechanism of the {\em heaps process} studied by Donnelly \cite{MR1104569}, 
also called a {\em move-to-front rule}.
The mechanism of this chain has been extensively studied, mostly in the case of finite number of nonzero frequencies, due to its interest
in computer science
\cite{MR1603279} 
\cite{MR1700744}. 
Donnelly's result that proper frequencies  $\Pbul$ are in a size-biased order iff the distribution of $\Pbul$ is invariant under this transition mechanism is an immediate consequence of the above corollary.
It seems surprising, but nowhere in Donnelly's article, or elsewhere in the literature we are aware of, is it mentioned that the
random-to-top rule is the universal time-reversed evolution of  cluster frequencies in order of appearance for shifts of any exchangeable random partition of $\BN$ with proper frequencies.
We are also unaware of any previous description of the time-forwards evolution of these cluster frequencies, as detailed in the corollary.

The rest of this article is organized as follows. Theorem \ref{thm:main} is proved in Section \ref{sec:proof}.
In Section \ref{sec:crl} we first recall an idea from \cite{0gibbs} which led us to develop the results of this article. This leads to a proposition
which we combine with Theorem \ref{thm:main} to obtain Corollary \ref{crl:main}. Finally, Section \ref{sec:rel} provides some references to related literature.

\section{Proof of Theorem \ref{thm:main}}
\label{sec:proof}
\begin{proof}
The convergence in distribution of $\Pi_\infty^{(n)}$ to $\tilPi_\infty$ is obtained by a coupling argument.
Given the frequencies $\Pbuln{0}$ of $\Pi_\infty$ and the independent i.i.d.\ sequence $(U_j)$ of uniform on $[0,1]$ random variables, 
let us construct
a partially exchangeable random partition $\hatPi_\infty$ distributed as $\Pi_\infty$, and an exchangeable random partition
$\tilPi_\infty$ such that the convergence of $\hatPi_\infty^{(n)}$ to $\tilPi_\infty$ holds almost surely.
Set $R_k:= \sum_{i=1}^k P^{(0)}_i$ and construct
$\hatPi_\infty$ as the partition generated by values of the table allocation process $(A_n)$ defined by $A_1:= 1$ and given that $A_1, \ldots, A_n$
have been assigned with $K_n:= \max_{1 \le i \le n} A_i$ distinct tables,  $A_{n+1} = j$ if $U_{n+1} \in (R_{j-1},R_j]$ for some $1 \le j \le k$
and $A_{n+1} = k+1$ if $U_{n+1} \in (R_{k},1]$. The limiting exchangeable
random partition $\tilPi_\infty$ is conveniently defined on the same probability space to be the random partition of $\BN$ whose list of clusters with strictly
positive frequencies is $\tilCC_k:= \{n : U_n \in (R_{k-1},R_k] \}$ for $k$ with $R_{k-1} < R_k$, and with each remaining element of $\BN$ a singleton cluster.
Let $\CC_k$ be the $k$th cluster of  $\hatPi_\infty$ in order of appearance. The key observation is that for each $n \ge 1$ the intersections of 
 $\CC_k$ and $\tilCC_k$ with $[n+1, \infty)$ are identical on the event $(K_n \ge k)$. In more detail, if say $K_n = k$, then for all $ i > n$
\begin{itemize}
\item if $U_i \le R_k$ then almost surely both $i \in \CC_j$ and $i \in \tilCC_j$ for some $1 \le j \le k$;
\item if $U_i  > R_\infty:= \lim_n R_k$ then $i \in \CC_j$ for some $j > k$, while $\{i\}$ is a singleton cluster of $\tilPi_\infty$;
\item if $U_i  \in(R_k, R_\infty]$ then $i \in \CC_j$ and $i \in \tilCC_{\ell}$ for some $j > k $ and $\ell >k$.
\end{itemize}
Consider the restrictions $\hatPi_{[n+1,n+m]}$ and $\tilPi_{[n+1,n+m]}$ of
$\hatPi_\infty$ and $\tilPi_{\infty}$ to the interval of integers $[n+1,n+m]$ and call their clusters 
which are non-empty intersections of $\CC_j$ for $1\le j \le k$ with $[n+1,n+m]$ {\em old\/} and all other clusters {\em new}. It follows from the above description that 
old clusters are the same for both partitions, and only new clusters may differ. However it turns out that if $n$ is large and $m$
is fixed, with high probability all new clusters are singletons in both partitions. Indeed, for a new cluster $\CC$ of  $\hatPi_{[n+1,n+m]}$ to contain 
an element $j > i:=\min \CC$, $U_j$ must hit some interval $(R_{\ell-1}, R_\ell]$ with $\ell>k$, and in
particular must hit $(R_k, R_\infty]$. For a new cluster $\tilCC$ of $\tilPi_{[n+1,n+m]}$ to contain two elements $i$ and $j$, $U_i$ and $U_j$ both should get  
into some interval $(R_{\ell-1}, R_\ell]$ with $\ell>k$. Thus there is the coupling bound
\begin{equation}
\label{couping}
\P( \hatPi_{[n+1,n+m]} \ne \tilPi_{[n+1,n+m]} ) \le m\, \E ( R_\infty - R_{K_n} ) .
\end{equation}
But as $n \to \infty$, there is almost sure convergence of $R_{K_n}$ to $R_\infty \le 1$, so the bound converges to $0$ for each fixed $m$. This 
proves pointwise convergence  of the PEPPF of $\Pi_\infty^{(n)}$ to the EPPF of $\tilPi_\infty$. The convergence of 
finite-dimensional distributions of $\Pbuln{n}$ to those of $\Pbul$ follows from \cite[Theorem 15]{MR1337249}.
The final assertion, not obvious only in part that if $\Pi_\infty^{(n)}$ is stationary, then $\Pi_\infty$ is exchangeable, follows immediately.
\end{proof}

\section{Sampling frequencies in size-biased order}
\label{sec:crl}

Let $\Cl{1}, \Cl{2}, \ldots$ be the list of clusters of an exchangeable random partition
$\Pi_\infty$, in the appearance order of their least elements. 
Let $\mindex{i}{1}:=\min \Cl{i}$, and assuming that $\Cl{i}$ is infinite 
let $\mindex{i}{1} < \mindex{i}{2} <  \cdots$ be the elements of $\Cl{i}$ listed in increasing order. So in particular 
$1 = \mindex{1}{1} < \mindex{2}{1} < \cdots$ is the list of least elements of clusters $\Cl{1}, \Cl{2}, \ldots$.
Observe that the number of clusters $K_n$ of $\Pi_n$ is $K_n = \sum_{i = 1}^n 1(\mindex{i}{1} \le n)$. 
Let $\Xp$ 
be the number of 
distinct clusters of 
$\Pi_\infty$, 
including the first cluster,  which appear before the second element of the first cluster appears at time 
$\mindex{1}{2}$.
That is, with $K(n)$ instead of $K_n$ for ease of reading:
\begin{equation}
\label{Xpdef}
\Xp  := K ( \mindex{1}{2} ). 
\end{equation}
As explained below, if $\Pi_\infty$ is exchangeable with proper random frequencies $\Pbul$ in size-biased order, then 
$\Xp$  has the same distribution as a size-biased pick from $\Pbul$:
\begin{equation}
\label{xfromk2}
\P(\Xp = k) = \E P_k  \qquad ( k = 1,2, \ldots).
\end{equation}
An extended form of this identity in distribution, giving an explicit construction from $\Pi_\infty$ of an i.i.d.\ sample of arbitrary size from the frequencies of $\Pi_\infty$ in size-biased order, 
played a key role in \cite{0gibbs}.
If $\Pbul$ is defined by the limiting cluster frequencies of $\Pi_\infty$ in their order of discovery in the restriction of $\Pi_\infty$ to $\{2,3, \ldots\}$,
then it is easily seen from Kingman's paintbox construction of $\Pi_\infty$, that $\Xp$ really is a size-biased pick from $\Pbul$:
\begin{equation}
\label{xfromke}
\P(\Xp = k \giv \Pbul) =  P_k  \qquad ( k = 1,2, \ldots)
\end{equation}
from which \eqref{xfromk2} follows by taking expectations. But if $\Pbul$ is taken to be the frequencies of clusters of $\Pi_\infty$ in their order of discovery in
$\{1,2,3, \ldots\}$,
then \eqref{xfromke} is typically false, which makes  \eqref{xfromk2} much less obvious. This gives the identity \eqref{xfromk2} a ``now you see it, now you don't'' quality.
You  see it by conditioning on the frequencies of clusters of $\Pi_\infty \cap [2,\infty)$ in their order of appearance,
but you don't see it by conditioning on the  frequencies of $\Pi_\infty$ in their usual order of least elements,

It is instructive to see exactly what is the conditional distribution of $\Xp$ given $\Pbul$, for $\Pbul$ the original frequencies of $\Pi_\infty$ in order of appearance.
To deal with non-proper frequencies let us extend the definition \eqref{Xpdef} by assuming that $X=\infty$ if the cluster $\Cl{1}=\{1\}$ in $\Pi_\infty$. 
As indicated in the Introduction, 
the conditional distribution of any exchangeable random partition $\Pi_\infty$ of $\BN$,
 given its list of cluster frequencies $\Pbul$ in order of appearance, is that of the extreme
partially exchangeable random partition of $\BN$ with the given cluster frequencies $\Pbul$. Regarding $\Pbul$ as a list of fixed frequencies $P_j \ge 0$ with $\sum_j P_j \le 1$, the
distribution of this random partition $\Pi_\infty$ is described by the {\em extreme CRP} with fixed frequencies $\Pbul$. In terms of the Chinese Restaurant metaphor in this model 
\cite[Section 3.1]{MR2245368}, 
customer $1$ sits at table $1$; thereafter, 
\begin{equation}
\label{exCRP}
\parbox{0.8\textwidth}{
given $k$ tables are occupied and there are $n_i$ customers at table $i$ for $1 \le i \le k$ with $n_1 + \cdots + n_k = n$, customer $n+1$ sits at table $i$ with probability $P_i$ for $1 \le i \le k$, and at the
new table $k+1$ with probability $1 - P_1 - \cdots - P_k$.  
}
\end{equation}
Formally, ``customer $i$ sits at table $j$'' means in present notation that $i \in \Cl{j}$.
The identity \eqref{xfromk2} now becomes the special case 
when $\Pi_\infty$ is fully exchangeable of the following description of the law of $\Xp$ given $\Pbul$
for any partially exchangeable random partition $\Pi_\infty$ of $\BN$ with limit frequencies $\Pbul$:

\begin{proposition}
\label{lmm:condplem}
Let $\Pi_\infty$ be a partially exchangeable random partition of $\BN$ with
limit frequencies $\Pbul$, and let $\Xp$ be defined as above by \eqref{Xpdef}, with $\Xp = \infty$ if $\Cl{1} = \{1\}$.
Then 
\begin{itemize} 
\item the event $(\Xp<\infty) $ equals the event $(P_1>0)$;
\item the conditional distribution of $\Xp$ given $\Pbul$ is defined by the stick-breaking formula
\begin{gather}
\label{x1dist}
\BP[ \Xp = j \giv \Pbul] = H_j \smash[b]{\prod_{i=1}^{j-1}} (1 - H_i) \mbox{ for } j = 1,2, \ldots  
\shortintertext {with}
\label{hfacts}
H_1:= P_1 \mbox{ and } H_j:= \frac{P_1}{1-P_2-P_3-\dots-P_j} \mbox{ for } j = 2,3, \ldots ;
\end{gather}
\item the unconditional probability $\BP(\Xp = j)$ is the expected value of the product in \eqref{x1dist};
\item if\/ $\Pi_\infty$ is exchangeable then $\P(\Xp = j ) = \E P_j$ for all $j = 1,2, \ldots$, meaning that $\Xp$ has the same distribution  as a size-biased pick from $\Pbul$.
\end{itemize} 
\end{proposition}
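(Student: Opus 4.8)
The plan is to prove all four assertions by first conditioning on the frequency sequence $\Pbul$, treated as fixed numbers $P_j\ge 0$ with $\sum_j P_j\le 1$. By the structure theory recalled in the Introduction, the regular conditional law of $\Pi_\infty$ given $\Pbul$ is the extreme CRP with those frequencies, described by the seating rule \eqref{exCRP}; so I would realize $\Pi_\infty$ through that sequential model, write $K_n$ for the number of occupied tables after $n$ customers, and let $\tau:=\mindex{1}{2}$ be the time table $1$ first receives a second customer ($\tau:=\infty$ if never). Since customer $\tau$ joins an existing table, no new table opens then, so on $\{\tau<\infty\}$ one has $\Xp=K_\tau=K_{\tau-1}$, and the problem reduces to finding the conditional law of $K_\tau$ in the extreme CRP and then integrating out $\Pbul$.

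The feature of \eqref{exCRP} to exploit is that customer $n+1$'s choice depends on the configuration only through the \emph{set} of occupied tables, which — tables being opened in increasing order — is always $\{1,\dots,K_n\}$. Call a customer \emph{decisive} if she joins table $1$ or opens a new table; a non-decisive customer (joining one of tables $2,\dots,K_n$) leaves the occupied set unchanged. Hence, conditionally on the history up to the moment $K$ first equals $k\ge 2$, the first decisive customer afterwards joins table $1$ with conditional probability $P_1/\bigl(P_1+(1-P_1-\dots-P_k)\bigr)=P_1/(1-P_2-\dots-P_k)=H_k$, and otherwise opens table $k+1$; for $k=1$ there is no non-decisive move, so the relevant customer is customer $2$, joining table $1$ with probability $P_1=H_1$. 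By the Markov property these level-by-level outcomes are conditionally independent, and $\{\Xp=j\}$ is exactly the event that the decisive customer opens a new table at each level $i<j$ and joins table $1$ at level $j$; this gives $\BP[\Xp=j\giv\Pbul]=\bigl(\prod_{i=1}^{j-1}(1-H_i)\bigr)H_j$, which is \eqref{x1dist}--\eqref{hfacts}, and taking expectations over $\Pbul$ yields the third bullet at once.

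The first bullet follows from the same formula: since $1-P_2-\dots-P_k\le 1$, on $\{P_1>0\}$ every $H_i\ge P_1>0$, so $\prod_{i=1}^\infty(1-H_i)=0$ and $\sum_j\BP[\Xp=j\giv\Pbul]=1$, whereas on $\{P_1=0\}$ every $H_i=0$ and $\Xp=\infty$ a.s.; equivalently, in the seating picture, table $1$ gets a companion a.s.\ iff $P_1>0$, by Borel--Cantelli applied to the events $\{A_{n+1}=1\}$ of conditional probability $P_1$. For the last bullet I would \emph{not} try to verify $\E\bigl[H_j\prod_{i<j}(1-H_i)\bigr]=\E P_j$ directly from \eqref{x1dist} — that identity is true but genuinely opaque, the ``now you see it, now you don't'' effect noted in the text — but instead re-condition on a coarser $\sigma$-field. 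By \eqref{xfromke}, valid here because $\Pi_\infty$ is exchangeable, read with $\Pbul$ taken to be the frequencies $\Pbuln{1}$ of the clusters of $\Pi_\infty$ in their order of discovery \emph{inside} $\{2,3,\dots\}$, one has $\BP[\Xp=k\giv\Pbuln{1}]=P^{(1)}_k$: conditionally on the colours of $2,3,\dots$ in Kingman's construction the colour of $1$ is an independent sample from the directing measure, and $\{\Xp=k\}$ is exactly the event that it equals the $k$-th colour discovered among $2,3,\dots$. So $\BP(\Xp=k)=\E P^{(1)}_k$, and exchangeability gives $\Pi_\infty^{(1)}\ed\Pi_\infty$, hence $\Pbuln{1}\ed\Pbul$ and $\E P^{(1)}_k=\E P_k$; this is \eqref{xfromk2}.

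The one real obstacle is the clean formulation of the decisive-customer argument for \eqref{x1dist}: one must check that the extreme-CRP probabilities are insensitive to the occupancy counts (so the occupied set is a sufficient statistic for the seating mechanism) and that a decisive customer is certain to appear at every level that is reached — which fails exactly on $\{P_1=0\}$ — so that the geometric-type conditioning is legitimate. Once that is pinned down, the remaining steps are routine, the only other substantive choice being to obtain the last bullet by re-conditioning through \eqref{xfromke} rather than by a direct moment calculation.
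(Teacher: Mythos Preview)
Your proposal is correct and reaches all four assertions, but by a different route than the paper for the two substantive parts. For the stick-breaking formula \eqref{x1dist}, the paper computes $\P(\Xp=j)$ for fixed $\Pbul$ by direct summation: for each $j=1,2,3,\ldots$ it conditions on the gaps between the successive $\mindex{i}{1}$ and $\mindex{1}{2}$, obtains a product of geometric series in $P_2,\,P_2+P_3,\ldots$, and simplifies to $(1-H_1)\cdots(1-H_{j-1})H_j$, with an ``and so on'' for general $j$. Your decisive-customer argument is cleaner: it isolates the embedded chain of changes in the occupied-table set and reads off each factor $H_i$ as the conditional probability that the first decisive customer at level $i$ joins table~$1$, without any case-by-case series manipulation. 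For the last bullet, the paper does \emph{not} re-condition on $\Pbuln{1}$; instead it verifies $\E P_j=\E\bigl[H_j\prod_{i<j}(1-H_i)\bigr]$ algebraically from the exchangeability identity \eqref{gid}, taking $g(P_1,\dots,P_k)=P_k\prod_{i=1}^{k-1}(1-P_1-\cdots-P_i)^{-1}$ and the cyclic shift $\sigma=(2,\dots,k,1)$. Your paintbox argument via $\Pbuln{1}$ is shorter and, as you observe, works for improper frequencies as well (the paper sketches it only for proper $\Pbul$ before the proposition and then switches to \eqref{gid} for the general case); the paper's route, on the other hand, buys the companion identities for the remaining permutations $\sigma$ displayed after the proof.
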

\begin{proof} The first claim follows directly from the definitions, since $\Xp=\infty$ iff $\{1\}$ is a singleton cluster of $\Pi_\infty$, which
is equivalent to $P_1=0$.
By the general theory of exchangeable and partially exchangeable random partitions recalled in the Introduction, it is enough to 
prove the formula \eqref{x1dist}
for an arbitrary fixed sequence of frequencies $\Pbul$. 
The case $j = 1$, with 
$\BP[ \Xp = 1] = P_1$, is obvious from the extreme CRP \eqref{exCRP}, because the event $(\Xp = 1)$ is identical to the event $(\mindex{1}{2} = 2)$ that the second customer is seated at table $1$.
Consider next the event 
$$
(\Xp = 2 ) = ( 2 = \mindex{2}{1} < \mindex{1}{2} < \mindex{3}{1})
$$
Conditioning on  the value $\ell$ of $\mindex{1}{2} - \mindex{2}{1}  - 1$ on this event,
the extreme CRP description \eqref{exCRP} gives
$$
\P(\Xp = 2 ) = \sum_{\ell = 0}^\infty  (1-P_1) \, P_2^\ell \, P_1 = \frac{ (1-P_1) \, P_1 }{(1-P_2)} = (1 - H_1) \, H_2 
$$
for $H_j$ as in \eqref{hfacts}.  By the same method, conditioning on values $\ell$ of $\mindex{3}{1}  - \mindex{2}{1} - 1$ and $m$ of $\mindex{1}{2}  - \mindex{3}{1} - 1$ on the
event $(\Xp = 3) = ( 2 = \mindex{2}{1} < \mindex{3}{1}  <  \mindex{1}{2} < \mindex{4}{1})$, gives
\begin{align*}
\P(\Xp = 3 ) &= \sum_{\ell = 0}^\infty  \sum_{m = 0}^\infty (1-P_1) \, P_2^\ell \, (1-P_1-P_2) ( P_2 + P_3) ^m \,P_1 \\
&= \frac{ (1-P_1) (1-P_1- P_2) P_1 }{(1-P_2) ( 1 - P_2 - P_3)}  \\
&= (1 - H_1) (1 - H_2) H_3 ,
\end{align*}
and so on. 
This gives the stick-breaking formula \eqref{x1dist}.
Taking expectations gives the unconditional distribution of $\Xp$.

The last part of the proposition is a restatement of \eqref{xfromk2}, which was explained above for the case of proper frequencies $\Pbul$.
For general case when the frequencies of an exchangeable partition may be non-proper,  it is a consequence of the more general formula \eqref{x1dist} for
partially exchangeable partitions. That $\P(\Xp = j) = \E(P_j)$ is obvious for $j = 1$. For $j = 2$ and $j=3$ this assertion becomes
\begin{align}
\label{p12} \E P_2 &= \E \, \frac{ P_1 ( 1 - P_1) }{ ( 1 - P_2) }, \\
\label{p123} \E P_3 &= \E \, \frac{ P_1 ( 1 - P_1 ) ( 1 - P_1 - P_2) }{ (1- P_2 )( 1 - P_2 - P_3) }
\end{align}
and so on. These identities are not so obvious. However, they all follow from the 
consequence  of exchangeability of $\Pi_\infty$ that for every $ k \ge 1$ such that $\P\bigl( \Pi_k = \bigl\{\{1\}, \ldots, \{k \}\bigr\}\bigr) >0$,
the joint law of $P_1, \ldots, P_k$ given this event is exchangeable \cite{MR1337249}. Consequently,  in view of \eqref{exCRP}, for every non-negative Borel measurable function
$g$ defined on $[0,1]^k$,
\begin{equation}
\label{gid}
\E g (P_1, \ldots, P_k) \prod_{i = 1}^{k-1} ( 1 - P_1 - \cdots - P_i) = \E g (P_{\sigma(1)}, \ldots, P_{\sigma(k)}) \prod_{i = 1}^{k-1} ( 1 - P_1 - \cdots - P_i).
\end{equation}
For proper frequencies this identity is known 
\cite[Theorem 4]{MR1387889}  
to characterize the collection of all possible joint distributions of frequencies $\Pbul$ of exchangeable random partitions
$\Pi_\infty$ relative to the larger class of all $\Pbul$ with $P_j \ge 0$ and $\sum_{j } P_j \le  1$ which can arise from partially exchangeable partitions.
Take $g(P_1,P_2) = P_2(1-P_1)^{-1}$ and $(\sigma(1), \sigma(2)) = (2,1)$ to recover \eqref{p12} from \eqref{gid}.
Take $g(P_1,P_2,P_3) = P_3(1-P_1)^{-1}(1- P_1 - P_2)^{-1}$ and $(\sigma(1), \sigma(2),\sigma(3)) = (2, 3, 1)$ to recover \eqref{p123} from \eqref{gid}.
For general $k$, the required evaluation of $\E P_k$ is obtained by a similar substitution in \eqref{gid} for
$g(P_1,\dots,P_k) = P_k \prod_{i = 1}^{k-1} ( 1 - P_1 - \cdots - P_i)^{-1}$ and $\sigma = (2, \ldots, k, 1)$.
\end{proof}

\begin{proof}[Proof of Corollary \ref{crl:main}]
The fact that $\bigl(\Pbuln{n}, n = 0,1, \ldots\bigr)$ is a Markov chain with stationary transition probabilities as indicated follows easily from the
description \eqref{exCRP} of the extreme CRP. If the cluster $\mathcal{C}$ containing $n$ is a singleton in $\Pi_\infty$, then $P_1=P^{(n-1)}_1=0$
and frequencies $\Pbuln{n}$ of $\Pi_{\infty}$  restricted to $\{n+1,n+2,\dots\}$ are $(P_2,P_3,\dots)$. Otherwise the cluster $\mathcal{C}$ is
infinite and it obtains a new place as in Proposition \ref{lmm:condplem}.
The rest of the corollary follows easily from the theorem and the general theory of partially exchangeable random partitions of $\BN$
presented in the introduction.
\end{proof}

The above argument places the identity
\eqref{x1dist}
in a larger context of identities comparable to \eqref{p12} and \eqref{p123}, which follow from \eqref{gid} for other choices of $\sigma$ besides the
cyclic shift. For each $k = 3,4, \ldots$ there are $k! - 2$ more such identities. For instance, for $k = 3$ there are four more expressions for $\E P_3$,
corresponding to the choices of $\sigma = (3,2,1)$, 
 $(3,1,2)$, $(1,3,2)$ and $(2,1,3)$ respectively, with varying amounts of cancellation of factors, depending on $\sigma$:
\begin{align}
\E P_3 &= \E \, \frac{ P_1 ( 1 - P_1 ) ( 1 - P_1 - P_2) }{ (1- P_3 ) ( 1 - P_2 - P_3)}  \\
&= \E \, \frac{ P_2 ( 1 - P_1 ) ( 1 - P_1 - P_2) }{ (1- P_3 ) ( 1 - P_1 - P_3)}  \\
&= \E \, \frac{ P_2 ( 1 - P_1 - P_2) }{ ( 1 - P_1 - P_3)}  \\
&= \E \, \frac{ P_3 ( 1 - P_1) }{ ( 1 - P_2)}  .
\end{align}

\section{Related literature}
\label{sec:rel}
There is a substantial literature of various models of partial exchangeability for sequences and arrays of random variables, which has been surveyed in
\cite{MR2161313}. 
The article
\cite[\S 6.2]{MR2869248} 
places the theory of partially exchangeable partitions of $\BN$ in a larger context of boundary theory for Markov chains 
evolving as a sequence of connected subsets of a directed acyclic graph that grow in the following way: initially, all vertices of the graph are unoccupied, particles are fed in one-by-one at a distinguished source vertex, 
successive particles proceed along directed edges according to an appropriate stochastic mechanism, and each particle comes to rest once it encounters an unoccupied vertex.
The article \cite{MR2744243} 
discusses questions related to the size of the first cluster in a PEP, and its interaction with other clusters.
Gnedin \cite{MR2318409} indicates an application of PEPs to records in a partially ordered set.  

\bibliographystyle{amsplain}
\bibliography{gemmax,0gibbs,0ergodic}
\end{document}